\newtheorem{theorem}{Theorem}
\theoremstyle{plain}
\newtheorem{corollary}{Corollary}
\numberwithin{equation}{section}
\begin{document}
\title[Some unified Integrals Associated with Bessel-Struve kernel
function]{Some unified Integrals Associated with Bessel-Struve kernel
function}

\author{K. S. Nisar}
\address{Department of Mathematics, College of Arts and Science-Wadi Addwasir\\
Prince Sattam bin Abdulaziz University, Saudi Arabia}
\email{ksnisar1@gmail.com}

\author{P. Agarwal}
\address{Department of Mathematics, Anand International College of Engineering, Jaipur 303012, Rajasthan, India.}

\author{S. Jain}
\address{Department of Mathematics, Poornima College of Engineering,Jaipur 302002,
Rajasthan, India.}

\subjclass[2000]{Primary 05C38, 15A15; Secondary 05A15, 15A18}
\keywords{Bessel Struve kernel function, generalized Wright
functions,Integral representations}

\begin{abstract}
In this paper, we discuss the generalized integral formula involving
Bessel-Struve kernel function $S_{\alpha }\left( \lambda z\right) $, which
expressed in terms of generalized Wright functions. Many interesting special
cases also obtained in this study.
\end{abstract}

\maketitle

\section{Introduction}

In 1888 Pincherle studied the integrals involving product of Gamma functions
along vertical lines (see \cite{Pincherle,Pincherle1,Pincherle2}). Latterly,
Barnes \cite{Barnes} , Mellin \cite{Mellin} and Cahen \cite{Cahen} extended
the study and applied some of these integrals in the study of Riemann zeta
function and other Drichlet's series. The integral formulas involving
special functions have been developed by many researchers (\cite{Brychkov},%
\cite{Choi1}). In \cite{Garg} presented unified integral representation of
Fox H-functions and in \cite{Ali} hypergeometric $_{2}F_{1}$ functions.
Recently J. Choi and P. Agarwal \cite{Choi2} obtained two unified integral
representations of Bessel functions $J_{v}\left( z\right) $. Also, many
interesting integral formula involving $J_{v}\left( z\right) $ is given in 
\cite{Brychkov}and \cite{Watson}.

The Bessel-Struve kernel $S_{\alpha }\left( \lambda z\right) ,\lambda \in 
\mathbb{C},$ \cite{Gasmi} which is unique solution of the initial value
problem $l_{\alpha }u\left( z\right) =\lambda ^{2}u\left( z\right) $ with
the initial conditions $u\left( 0\right) =1$ and $u^{^{\prime }}\left(
0\right) =\lambda \Gamma \left( \alpha +1\right) /\sqrt{\pi }\Gamma \left(
\alpha +3/2\right) $ is given by 
\begin{equation*}
S_{\alpha }\left( \lambda z\right) =j_{\alpha }\left( i\lambda z\right)
-ih_{\alpha }\left( i\lambda z\right) ,\forall z\in C
\end{equation*}

where $j_{\alpha }$ and $h_{\alpha }$ are the normalized Bessel and Struve
functions

Moreover, the Bessel-Struve kernel is a holomorphic function on $\mathbb{C}%
\times \mathbb{C}$ and it can be expanded in a power series in the form

\begin{equation}
S_{\alpha }\left( \lambda z\right) =\sum_{n=0}^{\infty }\frac{\left( \lambda
z\right) ^{n}\Gamma \left( \alpha +1\right) \Gamma \left( \left( n+1\right)
/2\right) }{\sqrt{\pi }n!\Gamma \left( n/2+\alpha +1\right) },
\label{Bessel-Struve}
\end{equation}

The generalized Wright hypergeometric function ${}_{p}\psi _{q}(z)$ is given
by the series 
\begin{equation}
{}_{p}\psi _{q}(z)={}_{p}\psi _{q}\left[ 
\begin{array}{c}
(a_{i},\alpha _{i})_{1,p} \\ 
(b_{j},\beta _{j})_{1,q}%
\end{array}%
\bigg|z\right] =\displaystyle\sum_{k=0}^{\infty }\dfrac{\prod_{i=1}^{p}%
\Gamma (a_{i}+\alpha _{i}k)}{\prod_{j=1}^{q}\Gamma (b_{j}+\beta _{j}k)}%
\dfrac{z^{k}}{k!},  \label{eqn-4-Struve}
\end{equation}%
where $a_{i},b_{j}\in \mathbb{C}$, and real $\alpha _{i},\beta _{j}\in 
\mathbb{R}$ ($i=1,2,\ldots ,p;j=1,2,\ldots ,q$). Asymptotic behavior of this
function for large values of argument of $z\in {\mathbb{C}}$ were studied in 
\cite{Foxc} and under the condition 
\begin{equation}
\displaystyle\sum_{j=1}^{q}\beta _{j}-\displaystyle\sum_{i=1}^{p}\alpha
_{i}>-1  \label{eqn-5-Struve}
\end{equation}%
was found in the work of \cite{Wright-2,Wright-3}. Properties of this
generalized Wright function were investigated in \cite{Kilbas}, (see also 
\cite{Kilbas-itsf, Kilbas-frac}. In particular, it was proved \cite{Kilbas}
that ${}_{p}\psi _{q}(z)$, $z\in {\mathbb{C}}$ is an entire function under
the condition ($\ref{eqn-5-Struve}$).

The generalized hypergeometric function represented as follows \cite%
{Rainville}:

\begin{equation}
_{p}F_{q}\left[ 
\begin{array}{c}
\left( \alpha _{p}\right) ; \\ 
\left( \beta _{q}\right) ;%
\end{array}%
z\right] =\sum\limits_{n=0}^{\infty }\frac{\Pi _{j=1}^{p}\left( \alpha
_{j}\right) _{n}}{\Pi _{j=1}^{p}\left( \beta _{j}\right) _{n}}\frac{z^{n}}{n!%
},  \label{eqn-1-hyper}
\end{equation}

provided $p\leq q; p=q+1$ and $\left\vert z\right\vert <1$

where $\left( \lambda \right) _{n}$ is well known Pochhammer symbol defined
for $\left( \text{ for }\lambda \in C\right) $ (see \cite{Rainville})

\begin{equation}
\left( \lambda \right) _{n}:=\left\{ 
\begin{array}{c}
1\text{ \ \ \ \ \ \ \ \ \ \ \ \ \ \ \ \ \ \ \ \ \ \ \ \ \ \ \ \ \ \ \ \ \ \
\ \ }\left( n=0\right) \\ 
\lambda \left( \lambda +1\right) ....\left( \lambda +n-1\right) \text{ \ \ \
\ \ \ \ \ \ \ \ \ \ }\left( n\in N:=\{1,2,3....\}\right)%
\end{array}%
\right.  \label{eqn-2-hyper}
\end{equation}

\begin{equation}
\left( \lambda \right) _{n}=\frac{\Gamma \left( \lambda +n\right) }{\Gamma
\left( \lambda \right) }\text{ \ \ \ \ \ \ \ \ \ \ }\left( \lambda \in
C\backslash Z_{0}^{-}\right) .  \label{eqn-2b-hyper}
\end{equation}

where $Z_{0}^{-}$ is the set of nonpositive integers.

If we put $\alpha _{1}=...=\alpha _{p}=\beta _{1}=....=\beta _{q} $ in ($\ref%
{eqn-4-Struve}$),then ($\ref{eqn-1-hyper}$) is a special case of the
generalized Wright function:

\begin{equation}
{}_{p}\psi _{q}(z)={}_{p}\psi _{q}\left[ 
\begin{array}{c}
\left( \alpha _{1},1\right) ,...,\left( \alpha _{p},1\right) ; \\ 
\left( \beta _{1},1\right) ,...,\left( \beta _{q},1\right) ;%
\end{array}%
z\right] =\dfrac{\prod_{j=1}^{p}\Gamma (\alpha _{j})}{\prod_{j=1}^{q}\Gamma
(\beta _{j})}\text{ }_{p}F_{q}\left[ 
\begin{array}{c}
\alpha _{1},...,\alpha _{p}; \\ 
\beta _{1},...,\beta _{q};%
\end{array}%
z\right]  \label{eqn-3-hyper}
\end{equation}

For the present investigation, we need the following result of Oberbettinger 
\cite{Ober}

\begin{equation}
\int_{0}^{\infty }x^{\mu -1}\left( x+a+\sqrt{x^{2}+2ax}\right) ^{-\lambda
}dx=2\lambda a^{-\lambda }\left( \frac{a}{2}\right) ^{\mu }\frac{\Gamma
\left( 2\mu \right) \Gamma \left( \lambda -\mu \right) }{\Gamma \left(
1+\lambda +\mu \right) }  \label{eqn-int1}
\end{equation}

provided $0<Re\left( \mu \right) <Re\left( \lambda \right) $

Motivated by the work of \cite{Choi1} , here we present the integral
formulas of Bessel-Struve Kernel function of first kind $S_{\alpha }\left(
\lambda z\right) $ ,$\lambda \in \mathbb{C},$which expressed interns of
generalized Wright or generalized hypergeometric functions. 

\section{Main results}

Two generalized integral formulas established here, which expressed in terms
of generalized (Wright) hypergeometric functions $\left( \ref{eqn-3-hyper}%
\right) $ by inserting the Bessel-Struve kernel function of the first kind $%
\left( \ref{Bessel-Struve}\right) $ with the suitable argument in the
integrand of $\left( \ref{eqn-int1}\right) $

\begin{theorem}
For $\lambda ,\mu ,\nu ,\gamma \in \mathbb{C},$and $x>0$, $\mathbb{R}\left(
\lambda \right) >\mathbb{R}\left( \mu \right) >0,$ then the  following
integral formula holds true:

\begin{eqnarray}
&&\int_{0}^{\infty }x^{\mu -1}\left( x+a+\sqrt{x^{2}+2ax}\right) ^{-\lambda
}S_{\alpha }\left( \frac{\gamma y}{x+a+\sqrt{x^{2}+2ax}}\right) dx  \notag \\
&=&\frac{2^{1-\mu }a^{\mu -\lambda }\Gamma \left( \alpha +1\right) \Gamma
\left( 2\mu \right) }{\sqrt{\pi }}  \notag \\
&&\times _{3}\Psi _{2}\left[ 
\begin{array}{c}
\left( \frac{1}{2},\frac{1}{2}\right) ,\left( \lambda +1,1\right) ,\left(
\lambda -\mu ,1\right) ; \\ 
\left( \lambda ,1\right) ,\left( 1+\lambda +\mu ,1\right) ;%
\end{array}%
\frac{\gamma y}{a}\right]  \label{1}
\end{eqnarray}
\end{theorem}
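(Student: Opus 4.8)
The plan is to expand the Bessel-Struve kernel as its defining power series $\left(\ref{Bessel-Struve}\right)$, interchange the summation with the integral, and reduce each resulting term to a single application of Oberhettinger's formula $\left(\ref{eqn-int1}\right)$. Writing $w(x)=x+a+\sqrt{x^{2}+2ax}$ for brevity, I would first insert
\[
S_{\alpha}\!\left(\frac{\gamma y}{w(x)}\right)=\sum_{n=0}^{\infty}\frac{(\gamma y)^{n}\,\Gamma(\alpha+1)\,\Gamma\!\left(\tfrac{n+1}{2}\right)}{\sqrt{\pi}\,n!\,\Gamma\!\left(\tfrac{n}{2}+\alpha+1\right)}\,w(x)^{-n}
\]
into the integrand. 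Each term contributes a factor $w(x)^{-n}$ that merges with the standing factor $w(x)^{-\lambda}$ to give the single power $w(x)^{-(\lambda+n)}$.

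I would then interchange summation and integration. This is the first point requiring care: under the hypotheses $\mathbb{R}(\lambda)>\mathbb{R}(\mu)>0$, together with $\left|\gamma y/a\right|$ small enough for the kernel series to converge, the series converges uniformly on $(0,\infty)$ and each termwise integral is finite (note $0<\mathbb{R}(\mu)<\mathbb{R}(\lambda)\le\mathbb{R}(\lambda+n)$, so the hypothesis of $\left(\ref{eqn-int1}\right)$ holds for every $n$), so the interchange is justified by absolute convergence. Applying $\left(\ref{eqn-int1}\right)$ with $\lambda$ replaced by $\lambda+n$ yields
\[
\int_{0}^{\infty}x^{\mu-1}\,w(x)^{-(\lambda+n)}\,dx=2(\lambda+n)\,a^{-(\lambda+n)}\left(\frac{a}{2}\right)^{\!\mu}\frac{\Gamma(2\mu)\,\Gamma(\lambda+n-\mu)}{\Gamma(1+\lambda+n+\mu)}.
\]
Collecting the $n$-independent constants produces $2a^{-\lambda}(a/2)^{\mu}=2^{1-\mu}a^{\mu-\lambda}$ and the factor $\Gamma(\alpha+1)\Gamma(2\mu)/\sqrt{\pi}$, which is exactly the stated prefactor.

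The remaining step, which I expect to be the main obstacle, is to recognise the leftover series as a generalized Wright function through $\left(\ref{eqn-4-Struve}\right)$. Using $(\lambda+n)=\Gamma(\lambda+n+1)/\Gamma(\lambda+n)$, the generic summand carries the numerator factors $\Gamma\!\left(\tfrac12+\tfrac12 n\right)$, $\Gamma(\lambda+1+n)$, $\Gamma(\lambda-\mu+n)$ and the denominator factors $\Gamma(\lambda+n)$, $\Gamma(1+\lambda+\mu+n)$, matching the numerator pairs $\left(\tfrac12,\tfrac12\right),(\lambda+1,1),(\lambda-\mu,1)$ and the denominator pairs $(\lambda,1),(1+\lambda+\mu,1)$ of the displayed ${}_{3}\Psi_{2}$. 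The subtlety is the surviving kernel factor $\Gamma\!\left(\tfrac{n}{2}+\alpha+1\right)$: being genuinely $n$-dependent, it must be carried inside as a third \emph{denominator} parameter $(\alpha+1,\tfrac12)$, so the natural outcome is a ${}_{3}\Psi_{3}$ rather than the displayed ${}_{3}\Psi_{2}$. The constant $\Gamma(\alpha+1)$ that already multiplies every term of the kernel series is what is legitimately absorbed into the prefactor and should not be conflated with this $n$-dependent denominator Gamma. I would reconcile this point and then verify the convergence condition $\left(\ref{eqn-5-Struve}\right)$ for the assembled Wright function to confirm it defines an entire function of $\gamma y/a$.
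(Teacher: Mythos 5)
Your proposal follows exactly the route of the paper's own proof: expand $S_{\alpha}$ by its series $\left(\ref{Bessel-Struve}\right)$, interchange sum and integral, apply $\left(\ref{eqn-int1}\right)$ with $\lambda$ replaced by $\lambda+n$, and reassemble the resulting series as a generalized Wright function. Your computation of the prefactor $2^{1-\mu}a^{\mu-\lambda}\Gamma(\alpha+1)\Gamma(2\mu)/\sqrt{\pi}$ and of the parameter pairs $\left(\tfrac12,\tfrac12\right),(\lambda+1,1),(\lambda-\mu,1)$ over $(\lambda,1),(1+\lambda+\mu,1)$ agrees with the paper's.

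The one point where you diverge is not a gap in your argument but a correct detection of an error in the paper. In the paper's proof the factor $\Gamma\!\left(\tfrac{n}{2}+\alpha+1\right)$ is present in the line containing $\sum_{n}\frac{(\gamma y)^{n}\Gamma(\alpha+1)\Gamma(\frac{n+1}{2})}{\sqrt{\pi}\,\Gamma(\frac{n}{2}+\alpha+1)\,n!}\int_{0}^{\infty}\cdots\,dx$ and then silently disappears in the next display, which is how the authors arrive at a ${}_{3}\Psi_{2}$. As you observe, this factor is genuinely $n$-dependent and must be retained as a denominator pair $\left(\alpha+1,\tfrac12\right)$, so the correct right-hand side is the ${}_{3}\Psi_{3}$
\begin{equation*}
\frac{2^{1-\mu}a^{\mu-\lambda}\Gamma(\alpha+1)\Gamma(2\mu)}{\sqrt{\pi}}\;{}_{3}\Psi_{3}\!\left[
\begin{array}{c}
\left(\tfrac12,\tfrac12\right),\left(\lambda+1,1\right),\left(\lambda-\mu,1\right);\\
\left(\alpha+1,\tfrac12\right),\left(\lambda,1\right),\left(1+\lambda+\mu,1\right);
\end{array}
\frac{\gamma y}{a}\right],
\end{equation*}
not the stated ${}_{3}\Psi_{2}$. (A quick sanity check: the formula as printed is independent of $\alpha$ except through the constant $\Gamma(\alpha+1)$, which cannot be right since the left-hand side depends on $\alpha$ through every term of the kernel series.) So rather than ``reconciling'' your ${}_{3}\Psi_{3}$ with the displayed ${}_{3}\Psi_{2}$, you should keep your version; it is the one the proof actually supports, and it also satisfies the convergence condition $\left(\ref{eqn-5-Struve}\right)$ since $\tfrac12+1+1-\left(\tfrac12+1+1\right)=0>-1$.
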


\begin{proof}
Consider the series representation of $S_{\alpha }\left( \frac{\gamma y}{x+a+%
\sqrt{x^{2}+2ax}}\right) $ and applying $\left( \ref{eqn-int1}\right) .$ By
interchanging the order of integration and summation,which verified by
uniform convergence of the involved series under the given conditions, we get

\begin{eqnarray*}
&&\int_{0}^{\infty }x^{\mu -1}\left( x+a+\sqrt{x^{2}+2ax}\right) ^{-\lambda
}S_{\alpha }\left( \frac{\gamma y}{x+a+\sqrt{x^{2}+2ax}}\right) dx \\
&=&\int_{0}^{\infty }x^{\mu -1}\left( x+a+\sqrt{x^{2}+2ax}\right) ^{-\lambda
}\sum_{n=0}^{\infty }\frac{\left( \frac{\gamma y}{x+a+\sqrt{x^{2}+2ax}}%
\right) ^{n}\Gamma \left( \alpha +1\right) \Gamma \left( \frac{n+1}{2}%
\right) }{\sqrt{\pi }\Gamma \left( \frac{n}{2}+\alpha +1\right) n!}dx \\
&=&\sum_{n=0}^{\infty }\frac{\left( \gamma y\right) ^{n}\Gamma \left( \alpha
+1\right) \Gamma \left( \frac{n+1}{2}\right) }{\sqrt{\pi }\Gamma \left( 
\frac{n}{2}+\alpha +1\right) n!}\int_{0}^{\infty }x^{\mu -1}\left( x+a+\sqrt{%
x^{2}+2ax}\right) ^{-\left( \lambda +n\right) }dx
\end{eqnarray*}

in view of the conditions give in Theorem 1and  applying the integral
formula $\left( \ref{eqn-int1}\right) $ ,we obtain the following integral
representation:

\begin{eqnarray*}
&&\int_{0}^{\infty }x^{\mu -1}\left( x+a+\sqrt{x^{2}+2ax}\right) ^{-\lambda
}S_{\alpha }\left( \frac{\gamma y}{x+a+\sqrt{x^{2}+2ax}}\right) dx \\
&=&2^{1-\mu }a^{-\lambda +\mu }\Gamma \left( \alpha +1\right) \Gamma \left(
2\mu \right) \pi ^{-1/2}\sum_{n=0}^{\infty }\Gamma \left( \frac{n+1}{2}%
\right) \frac{\Gamma \left( \lambda +n+1\right) }{\Gamma \left( \lambda
+n\right) } \\
&&\times \frac{\Gamma \left( \lambda +n-\mu \right) }{\Gamma \left( \lambda
+\mu +n+1\right) }\frac{\gamma ^{n}y^{n}}{n!a^{n}}
\end{eqnarray*}

which,upon using $\left( \ref{eqn-3-hyper}\right) $ ,yeilds $\left( \ref{1}%
\right) $. This completes proof of theorem 1
\end{proof}

\begin{theorem}
For $\lambda ,\mu ,\nu ,\gamma \in \mathbb{C}$ with $0<\mathbb{R}\left( \mu
\right) <\mathbb{R}\left( \lambda +\nu \right) $ and $x>0$. The following
integral formula hold true:%
\begin{eqnarray*}
&&\int_{0}^{\infty }x^{\mu -1}\left( x+a+\sqrt{x^{2}+2ax}\right) ^{-\lambda
}S_{\alpha }\left( \frac{\gamma xy}{x+a+\sqrt{x^{2}+2ax}}\right) dx \\
&=&\frac{2^{1+\mu }a^{\mu -\lambda }\Gamma \left( \alpha +1\right) \Gamma
\left( \lambda -\mu \right) }{\sqrt{\pi }\Gamma \left( 1+\lambda +\mu
\right) }\text{ }_{3}\Psi _{2}\left[ 
\begin{array}{c}
\left( \frac{1}{2},\frac{1}{2}\right) ,\left( 2\mu ,2\right) ,\left( \lambda
+1,1\right) ; \\ 
\left( \lambda ,1\right) ,\left( \alpha +1,\frac{1}{2}\right) ;%
\end{array}%
\right] \gamma y
\end{eqnarray*}
\end{theorem}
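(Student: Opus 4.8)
The plan is to follow the strategy used for Theorem 1, the only structural novelty being the extra factor of $x$ inside the Bessel--Struve argument. First I would replace $S_{\alpha}\left(\gamma xy/\left(x+a+\sqrt{x^{2}+2ax}\right)\right)$ by its defining power series $\left(\ref{Bessel-Struve}\right)$ and interchange summation with integration; exactly as in Theorem 1, this exchange is justified by the uniform convergence of the series on compact subsets under the stated hypotheses. The decisive difference from Theorem 1 is that the $n$-th summand now carries $\left(\gamma xy\right)^{n}=\left(\gamma y\right)^{n}x^{n}$, so after pulling the $x$-free factors out of the integral the remaining integrand is $x^{\mu+n-1}\left(x+a+\sqrt{x^{2}+2ax}\right)^{-\left(\lambda+n\right)}$. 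Thus each term is an Oberbettinger integral $\left(\ref{eqn-int1}\right)$ but now with \emph{both} parameters shifted, $\mu\mapsto\mu+n$ and $\lambda\mapsto\lambda+n$.

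Next I would apply $\left(\ref{eqn-int1}\right)$ termwise. With these shifts the formula delivers $\Gamma\left(2\left(\mu+n\right)\right)=\Gamma\left(2\mu+2n\right)$ in the numerator and $\Gamma\left(1+\lambda+\mu+2n\right)$ in the denominator, along with the prefactor $2\left(\lambda+n\right)a^{\mu-\lambda}2^{-\left(\mu+n\right)}$. Since $\left(\lambda+n\right)-\left(\mu+n\right)=\lambda-\mu$ is independent of $n$, the factor $\Gamma\left(\lambda-\mu\right)$ pulls out of the sum. The doubled numerator argument $\Gamma\left(2\mu+2n\right)$ is precisely what produces the pair $\left(2\mu,2\right)$, of Wright step size $2$, in the target function, whereas the kernel series contributes $\Gamma\left(\left(n+1\right)/2\right)$ and $\Gamma\left(n/2+\alpha+1\right)$, which account for the half-integer pairs $\left(\frac{1}{2},\frac{1}{2}\right)$ and $\left(\alpha+1,\frac{1}{2}\right)$.

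Finally I would collect the $n$-independent quantities $a^{\mu-\lambda}$, $\Gamma\left(\alpha+1\right)/\sqrt{\pi}$, the relevant power of $2$, and $\Gamma\left(\lambda-\mu\right)$ in front of the series, and rewrite $\lambda+n=\Gamma\left(\lambda+n+1\right)/\Gamma\left(\lambda+n\right)$ so as to expose the ratio generating the pairs $\left(\lambda+1,1\right)$ over $\left(\lambda,1\right)$. Reassembling the surviving quotient of Gamma functions according to the definition $\left(\ref{eqn-4-Struve}\right)$ of ${}_{p}\Psi_{q}$ then identifies the sum as a generalized Wright function and completes the proof.

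The step I expect to be the main obstacle is the faithful bookkeeping of the three different step sizes that appear in the Gamma arguments once the series is reassembled: the increment $2$ coming from the Oberbettinger-doubled $2\mu+2n$, the increment $1$ from the $\left(\lambda+1\right)/\lambda$ ratio, and the increment $\frac{1}{2}$ from the two kernel Gamma factors must all be matched simultaneously. In particular one must reconcile the genuinely $n$-dependent denominator factor $\Gamma\left(1+\lambda+\mu+2n\right)$ produced by $\left(\ref{eqn-int1}\right)$ with its appearance in the final answer, and keep careful track of the residual power $2^{-n}$ left inside the sum, since these determine both the exponent of $2$ in the prefactor and the precise argument carried by the resulting ${}_{p}\Psi_{q}$.
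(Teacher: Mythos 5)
Your strategy coincides with the paper's: expand the kernel via (\ref{Bessel-Struve}), interchange summation and integration, and apply (\ref{eqn-int1}) termwise with both parameters shifted, $\mu\mapsto\mu+n$ and $\lambda\mapsto\lambda+n$. Up to and including the application of the Oberbettinger formula your bookkeeping is in fact more careful than the paper's own.

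However, the ``main obstacle'' you flag at the end is not a reconciliation problem that careful bookkeeping will dissolve --- it is a genuine obstruction, and the paper's proof does not overcome it either. Carrying the termwise integration out honestly, the $n$-th term contains $a^{-(\lambda+n)}\left(a/2\right)^{\mu+n}=2^{-\mu-n}a^{\mu-\lambda}$ (so the powers of $a$ cancel completely and a residual $2^{-n}$ survives, which must be absorbed into the argument of the Wright function), together with the genuinely $n$-dependent denominator $\Gamma\left(1+\lambda+\mu+2n\right)$, which cannot be pulled out of the sum as the constant $\Gamma\left(1+\lambda+\mu\right)$. What the computation actually yields is
\begin{equation*}
\frac{2^{1-\mu }a^{\mu -\lambda }\Gamma \left( \alpha +1\right) \Gamma \left( \lambda -\mu \right) }{\sqrt{\pi }}\ {}_{3}\Psi _{3}\left[
\begin{array}{c}
\left( \frac{1}{2},\frac{1}{2}\right) ,\left( 2\mu ,2\right) ,\left( \lambda +1,1\right) ; \\
\left( \lambda ,1\right) ,\left( \alpha +1,\frac{1}{2}\right) ,\left( 1+\lambda +\mu ,2\right) ;
\end{array}
\frac{\gamma y}{2}\right]
\end{equation*}
and not the stated ${}_{3}\Psi_{2}$ with prefactor $2^{1+\mu}/\Gamma\left(1+\lambda+\mu\right)$ and argument $\gamma y$. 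The paper arrives at its displayed form only by treating $\Gamma\left(1+\lambda+\mu+2n\right)$ as $n$-independent, discarding the $2^{-n}$, and retaining a spurious $a^{-n}$ inside the sum. So the final step of your plan --- ``reassembling the surviving quotient of Gamma functions \ldots completes the proof'' --- cannot terminate in the formula you were asked to prove; done correctly, your method proves the corrected identity above instead.
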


\begin{proof}
Interchanging the order of integration and summation and the series
representation of Bessel Struve kerenel function, we get%
\begin{eqnarray*}
&&\int_{0}^{\infty }x^{\mu -1}\left( x+a+\sqrt{x^{2}+2ax}\right) ^{-\lambda
}S_{\alpha }\left( \frac{\gamma xy}{x+a+\sqrt{x^{2}+2ax}}\right) dx \\
&=&\int_{0}^{\infty }x^{\mu -1}\left( x+a+\sqrt{x^{2}+2ax}\right) ^{-\lambda
} \\
&&\times \sum_{n=0}^{\infty }\left( \frac{\gamma xy}{x+a+\sqrt{x^{2}+2ax}}%
\right) ^{n}\frac{\Gamma \left( \alpha +1\right) \Gamma \left( \frac{n+1}{2}%
\right) }{\sqrt{\pi }\Gamma \left( \frac{n}{2}+\alpha +1\right) n!}dx \\
&=&\sum_{n=0}^{\infty }\frac{\gamma ^{n}y^{n}\Gamma \left( \alpha +1\right)
\Gamma \left( \frac{n+1}{2}\right) }{\sqrt{\pi }\Gamma \left( \frac{n}{2}%
+\alpha +1\right) }\int_{0}^{\infty }x^{\mu +n-1}\left( x+a+\sqrt{x^{2}+2ax}%
\right) ^{-\left( \lambda +n\right) }dx
\end{eqnarray*}

in view of the condition give in theorem 1,we can apply the integral formula 
$\left( \ref{eqn-int1}\right) $ and obtain the following integral
representation:

\begin{eqnarray*}
&&\int_{0}^{\infty }x^{\mu -1}\left( x+a+\sqrt{x^{2}+2ax}\right) ^{-\lambda
}S_{\alpha }\left( \frac{\gamma xy}{x+a+\sqrt{x^{2}+2ax}}\right) dx \\
&=&\frac{2^{1+\mu }a^{\mu -\lambda }\Gamma \left( \alpha +1\right) \Gamma
\left( \lambda -\mu \right) }{\sqrt{\pi }\Gamma \left( 1+\lambda +\mu
\right) }\sum_{n=0}^{\infty }\Gamma \left( \frac{n+1}{2}\right) \frac{\Gamma
\left( \lambda +n+1\right) \Gamma \left( 2\mu +2n\right) }{\Gamma \left(
\lambda +n\right) \Gamma \left( \frac{n}{2}+\alpha +1\right) }\frac{\gamma
^{n}y^{n}}{n!a^{n}}
\end{eqnarray*}

which gives the desired result.
\end{proof}

\subsection{Representation of Bessel Struve kernel function in terms of
exponential function}

In this subsection we represent the Bessel Struve function in terms of
exponential function. Also, we derive the Marichev Saigo Maeda operator
representation of special cases. The representation Bessel Struve Kernel
function interms of exponential function as:%
\begin{equation}
S_{\frac{-1}{2}}\left( x\right) =e^{x},  \label{e1}
\end{equation}

\begin{equation}
S_{\frac{1}{2}}\left( x\right) =\frac{-1+e^{x}}{x}.  \label{e2}
\end{equation}

Now, we give the the following corollaries:

\begin{corollary}
For $\lambda ,\mu \in \mathbb{C}$ with $0<\mathbb{R}\left( \mu \right) <%
\mathbb{R}\left( \lambda \right) $ and $x>0$.The following integral formula
holds true%
\begin{eqnarray}
&&\int_{0}^{\infty }x^{\mu -1}\left( x+a+\sqrt{x^{2}+2ax}\right) ^{-\lambda
}e^{\left( \frac{y}{x+a+\sqrt{x^{2}+2ax}}\right) }dx  \label{3} \\
&=&2^{1-\mu }a^{\mu -\lambda }\Gamma \left( 2\mu \right) \text{ }_{2}\Psi
_{2}\left[ 
\begin{array}{c}
\left( \lambda +1,1\right) ,\left( \lambda -\mu ,1\right) ; \\ 
\left( \lambda ,1\right) ,\left( 1+\lambda -\mu \right) ;%
\end{array}%
\frac{y}{a}\right]   \notag
\end{eqnarray}

\begin{proof}
As same as in theorem 1 and theorem 2 , using the formula $\left( \ref%
{eqn-int1}\right) $ and $\left( \ref{e1}\right) $, one can easily reach the
result
\end{proof}
\end{corollary}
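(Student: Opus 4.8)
The plan is to prove $(\ref{3})$ by the same device used for Theorems 1 and 2, now with the exponential in place of the Bessel-Struve kernel. By the identity $(\ref{e1})$, $S_{-1/2}(x)=e^{x}$, the formula $(\ref{3})$ is simply the $\alpha=-\tfrac12$, $\gamma=1$ specialization of Theorem 1, but the most transparent route is to expand the exponential directly. First I would write $\exp\!\left(\dfrac{y}{x+a+\sqrt{x^{2}+2ax}}\right)=\sum_{n=0}^{\infty}\dfrac{1}{n!}\left(\dfrac{y}{x+a+\sqrt{x^{2}+2ax}}\right)^{n}$ and interchange summation and integration; this is legitimate because the argument $y/(x+a+\sqrt{x^{2}+2ax})$ is bounded by $y/a$ on $(0,\infty)$, so the series converges uniformly there, exactly as in the earlier proofs.

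After the interchange each term is $\dfrac{y^{n}}{n!}\displaystyle\int_{0}^{\infty}x^{\mu-1}\left(x+a+\sqrt{x^{2}+2ax}\right)^{-(\lambda+n)}dx$, to which I apply $(\ref{eqn-int1})$ with $\lambda$ replaced by $\lambda+n$. The decisive feature is that, in contrast with the kernel, the exponential contributes only $1/n!$ and no gamma functions, so the summand carries exactly the single gamma quotient produced by $(\ref{eqn-int1})$. Writing $2(\lambda+n)=2\,\Gamma(\lambda+n+1)/\Gamma(\lambda+n)$ and collecting constants yields the prefactor $2^{1-\mu}a^{\mu-\lambda}\Gamma(2\mu)$ times $\displaystyle\sum_{n=0}^{\infty}\dfrac{\Gamma(\lambda+n+1)\,\Gamma(\lambda+n-\mu)}{\Gamma(\lambda+n)\,\Gamma(1+\lambda+n+\mu)}\dfrac{1}{n!}\left(\dfrac{y}{a}\right)^{n}$, which is precisely the generalized Wright function ${}_{2}\Psi_{2}$ on the right of $(\ref{3})$, its top parameters being $(\lambda+1,1),(\lambda-\mu,1)$ and its bottom parameters $(\lambda,1),(1+\lambda+\mu,1)$.

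As a cross-check I would also recover $(\ref{3})$ directly from Theorem 1: putting $\alpha=-\tfrac12$ makes $\Gamma(\alpha+1)=\Gamma(\tfrac12)=\sqrt{\pi}$ cancel the prefactor $\pi^{-1/2}$, while in the series the kernel's denominator $\Gamma(\tfrac{n}{2}+\alpha+1)=\Gamma(\tfrac{n+1}{2})$ cancels its numerator $\Gamma(\tfrac{n+1}{2})$; equivalently the top parameter $(\tfrac12,\tfrac12)$ and the bottom parameter $(\alpha+1,\tfrac12)=(\tfrac12,\tfrac12)$ annihilate, collapsing ${}_{3}\Psi$ to ${}_{2}\Psi_{2}$ and reproducing the same series. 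I expect no genuine obstacle: the whole computation is bookkeeping, and the only matters needing care are the justification of the term-by-term integration and the check that $0<\mathbb{R}(\mu)<\mathbb{R}(\lambda)$ allows $(\ref{eqn-int1})$ to be applied for every $n\ge0$, which holds automatically since $\mathbb{R}(\lambda+n)\ge\mathbb{R}(\lambda)>\mathbb{R}(\mu)$.
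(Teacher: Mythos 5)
Your proposal is correct and follows essentially the same route as the paper, whose proof is the one-line remark that the result comes from the method of Theorems 1 and 2 combined with the identity $S_{-1/2}(x)=e^{x}$ and the Oberhettinger integral; your direct expansion of the exponential, and your cross-check by specializing Theorem 1 at $\alpha=-\tfrac12$, are just explicit versions of that same argument. Note only that your (correct) computation yields the bottom parameter $\left(1+\lambda+\mu,1\right)$, whereas the corollary as printed reads $\left(1+\lambda-\mu\right)$ — this is a typographical slip in the paper's statement, not a defect in your reasoning.
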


\begin{corollary}
Let the conditions given in Corollary 1 satisfied. Then the following
integral formula holds true%
\begin{eqnarray*}
&&\int_{0}^{\infty }x^{\mu -1}\left( x+a+\sqrt{x^{2}+2ax}\right) ^{-\lambda
}e^{\left( \frac{y}{x+a+\sqrt{x^{2}+2ax}}\right) }dx \\
&=&=\frac{2^{1-\mu }a^{\mu -\lambda }\Gamma \left( 2\mu \right) \Gamma
\left( \lambda +1\right) \Gamma \left( \lambda -\mu \right) \text{ }}{\Gamma
\left( \lambda \right) \Gamma \left( 1+\lambda -\mu \right) }\text{ }%
_{2}F_{2}\left[ 
\begin{array}{c}
\lambda +1,\lambda -\mu ; \\ 
\lambda ,1+\lambda -\mu ;%
\end{array}%
\frac{y}{a}\right] 
\end{eqnarray*}
\end{corollary}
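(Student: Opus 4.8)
The plan is to derive Corollary 2 directly from Corollary 1 by converting the generalized Wright function ${}_2\Psi_2$ on the right-hand side of $\left(\ref{3}\right)$ into a generalized hypergeometric function ${}_2F_2$ by means of the reduction formula $\left(\ref{eqn-3-hyper}\right)$. Since the conditions imposed are identical to those of Corollary 1, nothing new about convergence or the underlying integral must be established; the entire content of the corollary is the rewriting of a special Wright function as a Gauss-type series times a ratio of Gamma functions.

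First I would note that the ${}_2\Psi_2$ appearing in Corollary 1 has every $\alpha_i$ and every $\beta_j$ equal to $1$, so it is precisely of the special shape to which $\left(\ref{eqn-3-hyper}\right)$ applies. Taking $p=q=2$ and making the identifications $\alpha_1=\lambda+1$, $\alpha_2=\lambda-\mu$, $\beta_1=\lambda$, $\beta_2=1+\lambda-\mu$, formula $\left(\ref{eqn-3-hyper}\right)$ gives
\[
{}_2\Psi_2\left[
\begin{array}{c}
(\lambda+1,1),(\lambda-\mu,1); \\
(\lambda,1),(1+\lambda-\mu,1);
\end{array}
\frac{y}{a}\right]
=\frac{\Gamma(\lambda+1)\Gamma(\lambda-\mu)}{\Gamma(\lambda)\Gamma(1+\lambda-\mu)}\,
{}_2F_2\left[
\begin{array}{c}
\lambda+1,\lambda-\mu; \\
\lambda,1+\lambda-\mu;
\end{array}
\frac{y}{a}\right].
\]

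Next I would substitute this expression into the right-hand side of $\left(\ref{3}\right)$. Multiplying the prefactor $2^{1-\mu}a^{\mu-\lambda}\Gamma(2\mu)$ of Corollary 1 by the ratio of Gamma functions $\Gamma(\lambda+1)\Gamma(\lambda-\mu)/[\Gamma(\lambda)\Gamma(1+\lambda-\mu)]$ produced by the conversion yields exactly the constant displayed in the statement of Corollary 2, and the ${}_2F_2$ with the stated parameters emerges directly. This completes the proof.

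Because the whole argument is a single substitution of one known series identity into another, there is essentially no analytic obstacle. The only point demanding care is the bookkeeping: one must match the numerator and denominator parameters of the Wright function in the correct order against $\left(\ref{eqn-3-hyper}\right)$ so that the Gamma factors land where intended, and one must note that the condition $0<\mathbb{R}\left(\mu\right)<\mathbb{R}\left(\lambda\right)$ inherited from Corollary 1 ensures that all Gamma functions involved are well defined (no argument being a nonpositive integer), so the reduction $\left(\ref{eqn-3-hyper}\right)$ is legitimate.
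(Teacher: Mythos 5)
Your proposal is correct and is essentially the paper's own argument: the paper likewise obtains Corollary 2 by applying the reduction of the all-unit-weight Wright function to a ${}_{2}F_{2}$ (via the definitions in $\left(\ref{eqn-1-hyper}\right)$, $\left(\ref{eqn-2-hyper}\right)$ and $\left(\ref{eqn-3-hyper}\right)$) to the right-hand side of $\left(\ref{3}\right)$. The only caveat is one of bookkeeping inherited from the paper itself: the denominator parameter $1+\lambda-\mu$ appearing in Corollaries 1 and 2 is inconsistent with the $\Gamma\left(1+\lambda+\mu\right)$ coming from $\left(\ref{eqn-int1}\right)$ and Theorem 1, but your conversion is internally consistent with the statements as printed.
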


\begin{proof}
In the view of equations $\left( \ref{eqn-1-hyper}\right) $ , $\left( \ref%
{eqn-2-hyper}\right) $ and $\left( \ref{3}\right) $ , we obtain the required
result.
\end{proof}

\begin{corollary}
For $\lambda ,\mu ,\in \mathbb{C}$ with $0<\mathbb{R}\left( \mu \right) <%
\mathbb{R}\left( \lambda \right) $ and $x>0$.The following integral formula
holds true%
\begin{eqnarray*}
&&\int_{0}^{\infty }x^{\mu -1}\left( x+a+\sqrt{x^{2}+2ax}\right) ^{-\lambda
}e^{\left( \frac{y}{x+a+\sqrt{x^{2}+2ax}}-1\right) }dx \\
&=&2^{-\mu }a^{\mu -\lambda }\Gamma \left( 2\mu \right) \text{ }_{3}\Psi _{3}%
\left[ 
\begin{array}{c}
\left( \frac{1}{2},\frac{1}{2}\right) ,\left( \lambda +1,1\right) ,\left(
\lambda -\mu ,1\right) ; \\ 
\left( \frac{1}{2},\frac{3}{2}\right) ,\left( \lambda ,1\right) ,\left(
1+\lambda +\mu ,1\right) ;%
\end{array}%
\frac{y}{a}\right] 
\end{eqnarray*}
\end{corollary}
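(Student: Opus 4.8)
The plan is to recognize this corollary as the special case $\alpha = \tfrac12$ (with $\gamma = 1$) of Theorem 1, the bridge being the exponential representation (\ref{e2}) of the Bessel--Struve kernel. The printed integrand is exactly the exponential-type function furnished by (\ref{e2}) when $\alpha=\tfrac12$, so the left-hand side is an instance of the master integral already evaluated in Theorem 1, with $S_\alpha$ taken to be $S_{1/2}$. Thus the whole problem reduces to substituting $\alpha=\tfrac12$ into the general formula and simplifying the Gamma factors and the leading constant; no genuinely new integral needs to be computed.

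Carried out from scratch to be self-contained, the steps in order would be: first, insert the power-series expansion (\ref{Bessel-Struve}) of $S_{1/2}(y/w)$ (with $w=x+a+\sqrt{x^2+2ax}$) into the integral and interchange summation and integration, the interchange being justified by uniform convergence of the series under the hypothesis $0<\mathbb{R}(\mu)<\mathbb{R}(\lambda)$, precisely as in the proofs of Theorems 1 and 2; second, evaluate the resulting $x$-integral term by term using the Oberhettinger formula (\ref{eqn-int1}) with $\lambda$ replaced by $\lambda+n$, which yields the factor $2(\lambda+n)a^{-(\lambda+n)}(a/2)^\mu\Gamma(2\mu)\Gamma(\lambda+n-\mu)/\Gamma(1+\lambda+n+\mu)$; and third, collect the $n$-dependent Gamma factors and repackage them as a generalized Wright function via the definition (\ref{eqn-4-Struve}).

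The specialization $\alpha=\tfrac12$ is what produces the extra half-integer parameter: the denominator factor $\Gamma(\tfrac n2+\alpha+1)$ coming from (\ref{Bessel-Struve}) becomes $\Gamma(\tfrac n2+\tfrac32)$, contributing the lower Wright pair $(\alpha+1,\tfrac12)=(\tfrac32,\tfrac12)$, while the prefactor $\Gamma(\alpha+1)=\Gamma(\tfrac32)=\tfrac{\sqrt\pi}{2}$ cancels the $\sqrt\pi$ in the denominator and turns the leading constant $2^{1-\mu}a^{\mu-\lambda}\Gamma(2\mu)$ of Theorem 1 into $2^{-\mu}a^{\mu-\lambda}\Gamma(2\mu)$ — this is the origin of the factor-of-two drop. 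The numerator parameters $(\tfrac12,\tfrac12),(\lambda+1,1),(\lambda-\mu,1)$ and the two remaining lower parameters $(\lambda,1),(1+\lambda+\mu,1)$ carry over unchanged, assembling the claimed ${}_3\Psi_3$ with argument $y/a$.

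I expect the main obstacle to be clerical rather than conceptual: keeping the bookkeeping of the Gamma quotients consistent so that the half-integer lower parameter is recorded in the correct $(\text{parameter},\text{scale})$ order, and tracking the single factor $\tfrac12$ from $\Gamma(\tfrac32)$ through to the final constant. Beyond (\ref{eqn-int1}), (\ref{Bessel-Struve}), (\ref{e2}) and the convergence justification already invoked for the earlier results, the argument requires no further analytic input.
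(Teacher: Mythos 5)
Your proof is correct and is exactly the route the paper intends: the paper states this corollary without any proof, but its Corollaries 1 and 2 are proved in precisely this way (specialize the Theorem~1 computation to $\alpha=\tfrac12$ via the exponential representation (\ref{e2}), integrate term by term with (\ref{eqn-int1}), and repackage as a Wright function). Your bookkeeping also correctly produces the lower pair $\left(\tfrac32,\tfrac12\right)$ and the constant $2^{-\mu}$ from $\Gamma\left(\tfrac32\right)/\sqrt{\pi}=\tfrac12$, which confirms that the paper's printed pair $\left(\tfrac12,\tfrac32\right)$ is a transposition typo and that the printed integrand $e^{\left(\frac{y}{w}-1\right)}$ must be read as $S_{1/2}\left(\frac{y}{w}\right)=\left(e^{y/w}-1\right)\big/\left(y/w\right)$ rather than literally, since the literal reading would instead give $e^{-1}$ times the integral of Corollary~1.
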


\subsection{Relation between Bessel Struve kernel function and Bessel and
Struve function of \ first kind}

In this subsection we show the relation between $S_{\alpha }\left( x\right) $
and Bessel function $I_{v}\left( x\right) $ and Struve function $L_{v}\left(
x\right) $ by choosing particular values of $\alpha $

\begin{equation}
S_{0}\left( x\right) =I_{0}\left( x\right) +L_{0}\left( x\right) ,
\label{eqn-r1}
\end{equation}

\begin{equation}
S_{1}\left( x\right) =\frac{2I_{1}\left( x\right) +L_{1}\left( x\right) }{x},
\label{eqn-r2}
\end{equation}

In the light of above relations ,we have the following theorems:

\begin{theorem}
For $\lambda ,\mu \in \mathbb{C}$ with $0<\mathbb{R}\left( \mu \right) <%
\mathbb{R}\left( \lambda \right) $ and $x>0$.Then the following integral
formula holds true:%
\begin{eqnarray*}
&&\int_{0}^{\infty }x^{\mu -1}\left( x+a+\sqrt{x^{2}+2ax}\right) ^{-\lambda }
\\
&&\times \left[ I_{0}\left( \frac{y}{x+a+\sqrt{x^{2}+2ax}}\right)
+L_{0}\left( \frac{y}{x+a+\sqrt{x^{2}+2ax}}\right) \right] dx \\
&=&2^{1-\mu }a^{\mu -\lambda }\pi ^{-1/2}\Gamma \left( 2\mu \right) \text{ }%
_{3}\Psi _{3}\left[ 
\begin{array}{c}
\left( \frac{1}{2},\frac{1}{2}\right) ,\left( \lambda +1,1\right) ,\left(
\lambda -\mu ,1\right) ; \\ 
\left( 1,\frac{1}{2}\right) ,\left( \lambda ,1\right) ,\left( 1+\lambda +\mu
,1\right) ;%
\end{array}%
\frac{y}{a}\right] 
\end{eqnarray*}
\end{theorem}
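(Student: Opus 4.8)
The plan is to reduce this statement to Theorem 1 by recognizing the bracketed integrand as a single Bessel--Struve kernel. By the relation $\left(\ref{eqn-r1}\right)$, namely $S_{0}\left( x\right) =I_{0}\left( x\right) +L_{0}\left( x\right) $, the factor in square brackets is exactly $S_{0}\!\left( \frac{y}{x+a+\sqrt{x^{2}+2ax}}\right) $, so the left-hand integral is the one treated in Theorem 1 in the special case $\alpha =0$, $\gamma =1$. Rather than quote the closed form of Theorem 1 verbatim, I would re-run its derivation at $\alpha =0$, because the target $_{3}\Psi _{3}$ (as opposed to a $_{3}\Psi _{2}$) depends on carrying along the denominator factor $\Gamma \left( n/2+\alpha +1\right) $ coming from the kernel expansion $\left(\ref{Bessel-Struve}\right)$.

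Concretely, first I would substitute the power series $\left(\ref{Bessel-Struve}\right)$ with $\alpha =0$ and argument $\frac{y}{x+a+\sqrt{x^{2}+2ax}}$, then interchange summation and integration, justified by uniform convergence under $0<\mathbb{R}(\mu)<\mathbb{R}(\lambda)$ exactly as in the proof of Theorem 1. Each term reduces to $\int_{0}^{\infty }x^{\mu -1}\left( x+a+\sqrt{x^{2}+2ax}\right) ^{-(\lambda +n)}dx$, which I evaluate by the Oberhettinger formula $\left(\ref{eqn-int1}\right)$ with $\lambda $ replaced by $\lambda +n$. The factor $2(\lambda +n)=2\,\Gamma (\lambda +n+1)/\Gamma (\lambda +n)$ together with $\left( a/2\right) ^{\mu }$, the constants $\Gamma (\alpha +1)=\Gamma (1)=1$ and $\pi ^{-1/2}$, combine to produce the overall prefactor $2^{1-\mu }a^{\mu -\lambda }\pi ^{-1/2}\Gamma (2\mu )$.

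It then remains to read off the surviving series as a $_{3}\Psi _{3}$ via the defining identity $\left(\ref{eqn-4-Struve}\right)$. After simplification the general term carries the numerator $\Gamma \!\left( \tfrac{n+1}{2}\right) \Gamma (\lambda +n+1)\Gamma (\lambda +n-\mu )$ and the denominator $\Gamma \!\left( \tfrac{n}{2}+1\right) \Gamma (\lambda +n)\Gamma (1+\lambda +\mu +n)$, all over $n!$. Matching $\Gamma \!\left( \tfrac12+\tfrac12 n\right)$, $\Gamma (\lambda +1+n)$, $\Gamma (\lambda -\mu +n)$ to the upper parameters $\left( \tfrac12,\tfrac12\right) ,(\lambda +1,1),(\lambda -\mu ,1)$, and $\Gamma (1+\tfrac12 n)$, $\Gamma (\lambda +n)$, $\Gamma (1+\lambda +\mu +n)$ to the lower parameters $\left( 1,\tfrac12\right) ,(\lambda ,1),(1+\lambda +\mu ,1)$, delivers the stated $_{3}\Psi _{3}$ with argument $y/a$.

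The one point requiring genuine care is the lower parameter $\left( 1,\tfrac12\right)$: it arises solely from the kernel denominator $\Gamma (n/2+\alpha +1)$ at $\alpha =0$, since $\Gamma (n/2+1)=\Gamma (1+\tfrac12 n)$ matches the Wright convention with $b=1$, $\beta =\tfrac12$. If this factor were cancelled, the series would collapse to a $_{3}\Psi _{2}$ and the hypergeometric order would be wrong; keeping it explicit is the crux of the computation. Everything else is the same bookkeeping already carried out in Theorems 1 and 2.
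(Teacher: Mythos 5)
Your proposal is correct and follows essentially the same route as the paper: expand the bracket as the Bessel--Struve kernel $S_{0}$ via $\left(\ref{eqn-r1}\right)$, interchange sum and integral, apply $\left(\ref{eqn-int1}\right)$ with $\lambda$ replaced by $\lambda +n$, and identify the resulting series as the stated $_{3}\Psi _{3}$. Your decision to re-run the derivation rather than quote Theorem 1 is also well judged, since the $\Gamma \left( n/2+\alpha +1\right) $ factor must be retained to produce the lower parameter $\left( 1,\tfrac{1}{2}\right) $, exactly as in the paper's own computation.
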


\begin{proof}
Consider the relation given in $\left( \ref{eqn-r1}\right) $ and applying $%
\left( \ref{eqn-int1}\right) .$ By interchanging the order of integration
and summation,which verified by uniform convergence of the involved series
under the given conditions, we get%
\begin{eqnarray*}
&&\int_{0}^{\infty }x^{\mu -1}\left( x+a+\sqrt{x^{2}+2ax}\right) ^{-\lambda }
\\
&&\times \left[ I_{0}\left( \frac{y}{x+a+\sqrt{x^{2}+2ax}}\right)
+L_{0}\left( \frac{y}{x+a+\sqrt{x^{2}+2ax}}\right) \right] dx \\
&=&\int_{0}^{\infty }x^{\mu -1}\left( x+a+\sqrt{x^{2}+2ax}\right) ^{-\lambda
}\sum_{n=0}^{\infty }\left( \frac{y}{x+a+\sqrt{x^{2}+2ax}}\right) ^{n}\frac{%
\Gamma \left( \frac{n+1}{2}\right) }{\sqrt{\pi }n!\Gamma \left( \frac{n}{2}%
+1\right) }dx \\
&=&\sum_{n=0}^{\infty }\frac{\Gamma \left( \frac{n+1}{2}\right) y^{n}}{\sqrt{%
\pi }n!\Gamma \left( \frac{n}{2}+1\right) }\int_{0}^{\infty }x^{\mu
-1}\left( x+a+\sqrt{x^{2}+2ax}\right) ^{-\left( \lambda +n\right) }dx \\
&=&2^{1-\mu }a^{\mu -\lambda }\pi ^{-1/2}\Gamma \left( 2\mu \right)
\sum_{n=0}^{\infty }\frac{\Gamma \left( \frac{n+1}{2}\right) y^{n}}{n!\Gamma
\left( \frac{n}{2}+1\right) a^{n}}\frac{\Gamma \left( \lambda +n+1\right) }{%
\Gamma \left( \lambda +n\right) }\frac{\Gamma \left( \lambda +n-\mu \right) 
}{\Gamma \left( \lambda +n+1+\mu \right) }
\end{eqnarray*}

which gives the desired result.
\end{proof}

\begin{theorem}
The following integral formula holds true with  $\lambda ,\mu ,\in \mathbb{C}
$ with $0<\mathbb{R}\left( \mu \right) <\mathbb{R}\left( \lambda \right) $
and $x>0$.%
\begin{eqnarray*}
&&\int_{0}^{\infty }x^{\mu -1}\left( x+a+\sqrt{x^{2}+2ax}\right) ^{-\lambda }
\\
&&\times \left[ 2I_{1}\left( \frac{y}{x+a+\sqrt{x^{2}+2ax}}\right)
+L_{1}\left( \frac{y}{x+a+\sqrt{x^{2}+2ax}}\right) \right] dx \\
&=&2^{1-\mu }a^{\mu -\lambda }\pi ^{-1/2}\Gamma \left( 2\mu \right) \text{ }%
_{2}\Psi _{2}\left[ 
\begin{array}{c}
\left( \frac{1}{2},\frac{1}{2}\right) ,\left( \lambda -\mu ,1\right) ; \\ 
\left( 2,\frac{1}{2}\right) ,\left( 1+\lambda +\mu ,1\right) ;%
\end{array}%
\frac{y}{a}\right] 
\end{eqnarray*}
\end{theorem}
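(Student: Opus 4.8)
The plan is to follow the same scheme used in the proofs of Theorems 1--3, since the integrand again consists of the Oberhettinger kernel $\left(x+a+\sqrt{x^{2}+2ax}\right)^{-\lambda}$ multiplied by a linear combination of the modified Bessel and Struve functions $I_{1}$ and $L_{1}$. Writing $\xi=x+a+\sqrt{x^{2}+2ax}$ for brevity, the first step is to invoke the relation $\left(\ref{eqn-r2}\right)$, which gives $2I_{1}\left(y/\xi\right)+L_{1}\left(y/\xi\right)=\left(y/\xi\right)S_{1}\left(y/\xi\right)$; this collapses the bracketed term into a single Bessel--Struve kernel of parameter $\alpha=1$, so that the whole integrand is expressed through the power series $\left(\ref{Bessel-Struve}\right)$.

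Next I would insert that power series for $S_{1}\left(y/\xi\right)$ (noting that here $\Gamma\left(\alpha+1\right)=\Gamma(2)=1$) and interchange the order of summation and integration. As in the earlier proofs, the interchange is legitimate by the uniform convergence of the series on the region of integration under the hypothesis $0<\mathbb{R}\left(\mu\right)<\mathbb{R}\left(\lambda\right)$. Each resulting term is then a pure Oberhettinger integral $\int_{0}^{\infty}x^{\mu-1}\xi^{-\left(\lambda+n\right)}dx$, the exponent being raised by the summation index on account of the expansion of $\left(y/\xi\right)^{n}$, and this is evaluated in closed form by $\left(\ref{eqn-int1}\right)$, producing the factor $\dfrac{\Gamma\left(2\mu\right)\Gamma\left(\lambda+n-\mu\right)}{\Gamma\left(1+\lambda+n+\mu\right)}$ together with the elementary powers $2^{1-\mu}a^{\mu-\lambda-n}$.

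Finally I would collect the $n$-dependent Gamma factors $\Gamma\left(\tfrac{n+1}{2}\right)$, $\Gamma\left(\tfrac{n}{2}+2\right)$, $\Gamma\left(\lambda+n-\mu\right)$ and $\Gamma\left(1+\lambda+n+\mu\right)$, pull out the constants $2^{1-\mu}a^{\mu-\lambda}\pi^{-1/2}\Gamma\left(2\mu\right)$, and read off the remaining series as a generalized Wright function via definition $\left(\ref{eqn-4-Struve}\right)$, matching the numerator pair $\left(\tfrac12,\tfrac12\right),\left(\lambda-\mu,1\right)$ and the denominator pair $\left(2,\tfrac12\right),\left(1+\lambda+\mu,1\right)$ in the argument $y/a$.

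The step I expect to be the main obstacle is the bookkeeping forced by the factor $y/\xi$ supplied by $\left(\ref{eqn-r2}\right)$: unlike Theorem 3, where $I_{0}+L_{0}=S_{0}$ feeds directly into the kernel, here the extra factor shifts both the power of $\xi$ and the summation index, and one must check carefully that the quotient of Gamma functions indeed collapses to a $_{2}\Psi_{2}$ with the listed parameters rather than to a $_{3}\Psi_{3}$ of the type seen in the companion results. In particular I would track the index shift and the leading-order behaviour in $y$ closely, since it is precisely the reduction of the parameter count here (the absence of the $\left(\lambda+1,1\right)/\left(\lambda,1\right)$ pair present in Theorem 3) that distinguishes this formula, and that must be verified term by term.
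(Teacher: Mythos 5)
You set the proof up along the same lines as the paper --- rewrite the bracket via $\left(\ref{eqn-r2}\right)$, expand the Bessel--Struve kernel by $\left(\ref{Bessel-Struve}\right)$, interchange sum and integral, and evaluate term by term with $\left(\ref{eqn-int1}\right)$ --- and, unlike the paper, you correctly retain the factor $y/\xi$ that $\left(\ref{eqn-r2}\right)$ forces, where $\xi=x+a+\sqrt{x^{2}+2ax}$. But you then defer exactly the verification on which everything turns, and that verification fails. With $2I_{1}\left(y/\xi\right)+L_{1}\left(y/\xi\right)=\left(y/\xi\right)S_{1}\left(y/\xi\right)$, the $n$-th term of the expansion carries $\left(y/\xi\right)^{n+1}$, so $\left(\ref{eqn-int1}\right)$ must be applied with $\lambda+n+1$ in place of $\lambda$; this yields $\Gamma\left(\lambda+n+1-\mu\right)/\Gamma\left(\lambda+n+2+\mu\right)$, a prefactor $2\left(\lambda+n+1\right)a^{-\left(\lambda+n+1\right)}$, and the power $y^{n+1}$ --- a series that is not the stated ${}_{2}\Psi_{2}$. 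Indeed the claimed identity cannot hold as written: at $y=0$ the left-hand side vanishes (since $I_{1}\left(0\right)=L_{1}\left(0\right)=0$), while the right-hand side reduces to the nonzero quantity $2^{1-\mu}a^{\mu-\lambda}\Gamma\left(2\mu\right)\Gamma\left(\lambda-\mu\right)/\Gamma\left(1+\lambda+\mu\right)$. So the ``bookkeeping'' you flag as the main obstacle is a genuine obstruction, not a routine check.

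For comparison, the paper's own proof reaches the stated formula only by silently replacing $2I_{1}\left(y/\xi\right)+L_{1}\left(y/\xi\right)$ with $S_{1}\left(y/\xi\right)$, i.e.\ by not applying its own relation $\left(\ref{eqn-r2}\right)$, and in its final line it also drops the factor $\lambda+n$ arising from the $2\lambda$ in $\left(\ref{eqn-int1}\right)$ (which is why the $\left(\lambda+1,1\right)/\left(\lambda,1\right)$ pair present in Theorems 1 and 3 is absent here). Your instinct to track the index shift and the leading behaviour in $y$ is exactly right; carried to the end, it shows that the theorem as stated does not follow from the cited ingredients, rather than that your argument merely needs more care.
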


\begin{proof}
Consider the series representation of $S_{1}\left( \frac{y}{x+a+\sqrt{%
x^{2}+2ax}}\right) $ and applying $\left( \ref{eqn-int1}\right) $and
interchanging the order of integration and summation, we get%
\begin{eqnarray*}
&&\int_{0}^{\infty }x^{\mu -1}\left( x+a+\sqrt{x^{2}+2ax}\right) ^{-\lambda }
\\
&&\times \left[ 2I_{1}\left( \frac{y}{x+a+\sqrt{x^{2}+2ax}}\right)
+L_{1}\left( \frac{y}{x+a+\sqrt{x^{2}+2ax}}\right) \right] dx \\
&=&\int_{0}^{\infty }x^{\mu -1}\left( x+a+\sqrt{x^{2}+2ax}\right) ^{-\lambda
}S_{1}\left( \frac{y}{x+a+\sqrt{x^{2}+2ax}}\right) dx \\
&=&\sum_{n=0}^{\infty }\frac{\Gamma \left( \frac{n+1}{2}\right) y^{n}}{\sqrt{%
\pi }n!\Gamma \left( \frac{n}{2}+2\right) }\int_{0}^{\infty }x^{\mu
-1}\left( x+a+\sqrt{x^{2}+2ax}\right) ^{-\left( \lambda +n\right) }dx \\
&=&2^{1-\mu }a^{\mu -\lambda }\pi ^{-1/2}\Gamma \left( 2\mu \right)
\sum_{n=0}^{\infty }\frac{\Gamma \left( \frac{n+1}{2}\right) \Gamma \left(
\lambda +n-\mu \right) }{\Gamma \left( \frac{n}{2}+2\right) \Gamma \left(
1+\lambda +n+\mu \right) }\frac{y^{n}}{a^{n}n!}
\end{eqnarray*}

which gives the required result.
\end{proof}

\textbf{Conclusion}

The generalized integral formula involving Bessel-Struve kernel function $%
S_{\alpha }\left( \lambda z\right) $, which expressed in terms of
generalized Wright functions are given in this paper. Also the relation
between exponential function, Bessel function and Struve function with
Bessel-Struve kernel function is also discussed with particular cases.

\end{document}